\newtheorem{prop}{Proposition}
\newtheorem{thm}[prop]{Theorem}
\newtheorem{lemma}[prop]{Lemma}
\theoremstyle{remark}
\newcommand{\be}{\begin{equation}}
\newcommand{\ee}{\end{equation}}
\newcommand{\bea}{\begin{eqnarray}}
\newcommand{\eea}{\end{eqnarray}}
\newcommand{\beas}{\begin{eqnarray*}}
\newcommand{\eeas}{\end{eqnarray*}}
\begin{document}

\title{Skyscraper Numbers}

\author{Tanya Khovanova\\MIT \and Joel Brewster Lewis\\UMN}
\maketitle

\begin{abstract}
We introduce numbers depending on three parameters which we call \emph{skyscraper numbers}. We discuss properties of these numbers and their relationship with Stirling numbers of the first kind, and we also introduce a skyscraper sequence.
\end{abstract}

\section{Introduction}

In skyscraper puzzles you have to put an integer from $1$ to $n$ in
each cell of a square grid. Integers represent heights of buildings.
Every row and column needs to be filled with buildings of different
heights and the numbers outside the grid indicate how many buildings you
can see from this direction. For example, in the sequence $213645$ you can
see three buildings from the left (2,3,6) and two from the right (5,6).

In mathematical terminology, we are asked to build a Latin square such
that each row is a permutation of length $n$ with a given number of
left-to-right and right-to-left-maxima. The $7$-by-$7$ skyscraper puzzle in Figure~\ref{fig:puzzle} is from the Eighth World Puzzle Championship.

\begin{figure}[htp]
\centering
\includegraphics[scale=0.6]{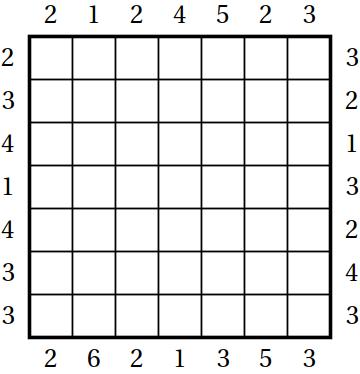}
\caption{A skyscraper puzzle}
\label{fig:puzzle}
\end{figure}

Latin squares are notoriously complicated and difficult to enumerate.
Thus, in this paper we discuss the combinatorics of a single row instead of asking about the entire puzzle.  We give formal definitions and provide examples in Section~\ref{sec:definitions}. In Section~\ref{sec:numbers} we give formulas for skyscraper numbers and in Section~\ref{sec:sequence} we define the skyscraper sequence. In Section~\ref{sec:properties} we prove some properties of skyscraper numbers and the sequence.

\section{Definitions and Examples}\label{sec:definitions}

What can you say about a row of a skyscraper puzzle if you ignore all information except the numbers given at the two ends of the row?  In particular, how many permutations of the buildings are there that satisfy the given restrictions?  Of course, the outside numbers have to be between $1$ and $n$, and we leave as an exercise the proof that their sum must be between $3$ and $n+1$.  Now we consider some of the simplest cases.

Suppose the two given numbers are
$n$ and $1$. In this case, the row is completely defined. There is
only one possibility: the buildings should be arranged in the increasing
order from the side where we see all of them.

Suppose the grid size is $n$ and the outside numbers are $a$ and $b$. Let us
denote the total number of permutations by $f_{n}(a, b)$. We
will assume that $a$ is on the left and $b$ is on the right. We call these number $f_{n}(a, b)$ the \textit{skyscraper numbers} and they are the object of our study.

In the previous example, we showed that $f_n(n, 1) = 1$. And of
course we have $f_n(a, b) = f_{n}(b, a)$.

Let us discuss a couple of other examples.

First, we want to discuss the case when the sum of the border numbers
is the smallest, namely $3$. In this case, $f_{n}(1, 2)$ is
$(n - 2)!$. Indeed, we need to put the tallest building on the left
and the second tallest on the right. After that we can permute the
leftover buildings any way we want.

Second, we want to discuss the case when the sum of the border numbers is the largest: $n+1$. In this case $f_{n}(a,n+1-a)$ is $\binom{n-1}{a-1}$. Indeed, the position of the tallest building is uniquely defined --- it has to take the $a$-th spot from the left. After that we can pick a set of $a-1$ buildings that go to the left from the tallest building and the row is uniquely defined by this set.

\section{Skyscraper Numbers}\label{sec:numbers}

\subsection{One-sided skyscraper numbers}

Before going further let us see what happens if only one of the maxima is given. Let us denote by $c(n, a)$ the number of permutations of $n$ buildings so that you can see $a$ buildings from the left. If we put the shortest building on the left then the leftover buildings need to be arrange so that you can see $a-1$ of them. If the shortest building is not on the left, then it can be in any of the $n-1$ places and we still need to rearrange the leftover buildings so that we can see $a$ of them. We just proved that the function $c(n, a)$ satisfies the recurrence
\[
c(n, a)=c(n - 1, a-1) + (n-1)c(n-1, a) = \sum_{k=1}^{n-1}\frac{(n-1)!}{(n-k-1)!}c(n - k, a-1).
\]
Actually $c(n, a)$ is a well-known function.  The numbers
$c(n, a)$ are called \textit{unsigned Stirling numbers of the first
kind} (see sequence A132393 in the Online Encyclopedia of Integer Sequences \cite{OEIS}); not only
do they count permutations with a given number of left-to-right (or
right-to-left) maxima, but they also count permutations with a given
number of cycles, and they appear as the coefficients in the product
$(x + 1)(x + 2)(x + 3)\ldots (x + n)$, among other places; see for example \cite[Chapter 1]{EC1}.

\subsection{The tallest building is on the left}

We are now equipped to calculate $f_{n}(1, b)$. The tallest building must be on the left, and the rest could be arranged so that, in addition to the tallest building, $b-1$ more buildings are seen from the right. That is, $f_{n}(1, b) = c(n-1, b-1)$.

The non-zero values of $f_{n}(1, b)$ are collected in Table~\ref{fn1b}.

\begin{table}
\begin{center}
    \begin{tabular}{|c|r|r|r|r|r|r|}
        \hline
        ~     & $b$=2 & $b$=3 & $b$=4 & $b$=5 & $b$=6 & $b$=7 \\ \hline
        $n$=2 & 1     & ~     & ~     & ~     & ~     & ~     \\ \hline
        $n$=3 & 1     & 1     & ~     & ~     & ~     & ~     \\ \hline
        $n$=4 & 2     & 3     & 1     & ~     & ~     & ~     \\ \hline
        $n$=5 & 6     & 11    & 6     & 1     & ~     & ~     \\ \hline
        $n$=6 & 24    & 50    & 35    & 10    & 1     & ~     \\ \hline
        $n$=7 & 120   & 274   & 225   & 85    & 15    & 1     \\
        \hline
    \end{tabular}
\caption{The Stirling numbers $c(n - 1, b - 1) = f_{n}(1, b)$}
\label{fn1b}
\end{center}
\end{table}

\subsection{Skyscraper numbers}

Now we have everything we need to consider the general case.  In any
permutation of length $n$, the left-to-right maxima consist of $n$ and all
left-to-right maxima that lie to its left; similarly, the
right-to-left maxima consist of $n$ and all the right-to-left maxima to
its right. We can take any permutation counted by $f_{n}(a, b)$
and split it into two parts: if the value $n$ is in position $k + 1$ for some $0 \leq k \leq n-1$, the first $k$ values form a permutation with $a - 1$ left-to-right maxima and the last $n - k - 1$ values form a permutation with $b - 1$ right-to-left maxima, and there are no other restrictions. Thus,
\[
f_n(a,b)=\sum_{k=0}^{n-1} \binom{n-1}{k} c(k, a-1) \cdot c(n-k-1, b-1).
\]

Table~\ref{f7ab} shows $f_{7}(a,b)$, of which we already calculated the first row:

\begin{table}[ht]
\begin{center}
    \begin{tabular}{|c|r|r|r|r|r|r|r|}
        \hline
        ~     & $b=1$ & $b=2$ & $b=3$ & $b=4$ & $b=5$ & $b=6$ & $b=7$ \\ \hline
        $a=1$ & 0     & 120   & 274   & 225   & 85    & 15    & 1     \\ \hline
        $a=2$ & 120   & 548   & 675   & 340   & 75    & 6     & 0     \\ \hline
        $a=3$ & 274   & 675   & 510   & 150   & 15    & 0     & 0     \\ \hline
        $a=4$ & 225   & 340   & 150   & 20    & 0     & 0     & 0     \\ \hline
        $a=5$ & 85    & 75    & 15    & 0     & 0     & 0     & 0     \\ \hline
        $a=6$ & 15    & 6     & 0     & 0     & 0     & 0     & 0     \\ \hline
        $a=7$ & 1     & 0     & 0     & 0     & 0     & 0     & 0     \\
        \hline
    \end{tabular}
\caption {The skyscraper numbers $f_{7}(a,b)$}
\label{f7ab}
\end{center}
\end{table}

\section{The Skyscraper Sequence}\label{sec:sequence}

We see that the first two rows of the puzzle above (see Figure~\ref{fig:puzzle}) have the same pair of numbers, namely 2 and 3, outside.  If we ignore all other constraints there are $675$ ways to fill in each of the first two rows. Number 675 is the largest number in the Table~\ref{f7ab}. We can say that these two rows of the puzzle are the most difficult to fill in: the pair of numbers 2 and 3 is the least restrictive. Given $n$, we call such a pair the \textit{maximizing pair}. 

By the way, the sequence of the number of ways to fill in the most difficult row for $n$ from 1 to 10 is: 1, 1, 2, 6, 22, 105, 675, 4872, 40614, 403704, 4342080, 50457000, 31548456, 8484089328, 121882518576, 1865935562400, 30341974222944, 522466493255424, 9499883854364928, 181927524046316544. This sequence is now sequence A218531 in the OEIS \cite{OEIS}. 

The maximizing pairs $(a,b)$ are (1, 1), (1, 2), (2, 2), (2, 2), (2, 2), (2, 3), (2, 3), (2, 3), (3, 3), (3, 3), (3, 3), (3, 3), (3, 3), {3, 3), (3, 3), (3, 3), (3, 3), (3, 3), (3, 3), (3, 3), (3, 4), (3, 4), (3, 4), (3, 4), (3, 4), (3, 4), (4, 4), (4, 4), (4, 4), (4, 4). One may notice that the numbers in these pairs do not differ by more than 1. We will prove that in Theorem~\ref{thm:maximizingpairs}.

The actual skyscraper puzzles are designed so that they have a unique solution. It is the interplay between rows and columns that allows to reduce the number of overall solutions to one.

\section{Properties}\label{sec:properties}

If you look at the antidiagonal in Table~\ref{f7ab} you can notice that the numbers there are exactly a row of a Pascal triangle. Moreover, if we rescale other lines that are parallel to the antidiagonal by their $\gcd$ we get exactly the Pascal's Triangle! We combine these observations in the following lemma.

\begin{lemma}
$f_n(a, b) = f_n(a + b - 1, 1) \cdot \binom{a + b - 2}{a - 1}$.
\end{lemma}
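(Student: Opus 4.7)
The plan is to reduce the claim to a standard convolution identity for unsigned Stirling numbers of the first kind and then give a short bijective proof.

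First I would reduce the right-hand side. By the symmetry $f_n(a,b) = f_n(b,a)$ noted in Section~\ref{sec:definitions} and the formula $f_n(1,c) = c(n-1,c-1)$ from Section~\ref{sec:numbers}, we have
\[
f_n(a+b-1,1) = f_n(1,a+b-1) = c(n-1,a+b-2).
\]
So the lemma is equivalent to the equality
\[
f_n(a,b) = \binom{a+b-2}{a-1}\,c(n-1,a+b-2).
\]

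Next I would expand the left-hand side using the general formula already derived in Section~\ref{sec:numbers}:
\[
f_n(a,b) = \sum_{k=0}^{n-1}\binom{n-1}{k}\,c(k,a-1)\,c(n-k-1,b-1).
\]
Writing $m = n-1$, $i = a-1$, $j = b-1$, the lemma becomes the Stirling convolution identity
\[
\sum_{k=0}^{m}\binom{m}{k}\,c(k,i)\,c(m-k,j) \;=\; \binom{i+j}{i}\,c(m,i+j).
\]

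Finally I would prove this identity bijectively, using the fact (recalled in Section~\ref{sec:numbers}) that $c(n,k)$ also counts permutations of $[n]$ with exactly $k$ cycles. The left-hand side counts triples consisting of a subset $S \subseteq [m]$ of size $k$, a permutation of $S$ with $i$ cycles, and a permutation of $[m]\setminus S$ with $j$ cycles, summed over $k$. The right-hand side counts pairs consisting of a permutation of $[m]$ with $i+j$ cycles together with a distinguished subset of $i$ of its cycles. The bijection is the obvious one: given the two permutations on $S$ and its complement, take their disjoint union to obtain a permutation of $[m]$ with $i+j$ cycles, and mark the $i$ cycles whose support lies in $S$; conversely, the marked cycles recover $S$ and the restrictions recover the two smaller permutations.

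I do not expect a genuine obstacle: the cycle interpretation of $c(n,k)$ makes the convolution identity essentially tautological, and the only real content is unpacking the definitions and invoking the symmetry $f_n(a,b)=f_n(b,a)$ to identify the factor $c(n-1,a+b-2)$ as $f_n(a+b-1,1)$. (As an alternative to the bijective step one could instead multiply the exponential generating functions $\sum_n c(n,k)x^n/n! = (-\log(1-x))^k/k!$ and read off coefficients, but the bijection is cleaner and fits the combinatorial tone of the paper.)
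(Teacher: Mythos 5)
Your proposal is correct and matches the paper's second proof of this lemma essentially step for step: both reduce the claim via $f_n(a+b-1,1)=c(n-1,a+b-2)$ and the convolution formula for $f_n(a,b)$, and both establish the resulting Stirling identity by double-counting permutations of $[n-1]$ with $a+b-2$ cycles of which $a-1$ are distinguished. (The paper also offers a separate, more direct bijection on skyscraper arrangements, but your route is the one it spells out in detail.)
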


\begin{proof}
The number $f_n(a + b - 1, 1)$ counts the number of ways to arrange skyscrapers so that the tallest building is on the right and the permutation has $a+b-1$ left-to-right maxima. For every skyscraper that is not the tallest building consider a group of buildings that are immediately after it to the right and that are located before the next left-to-right maximum. Choose $a-1$ left-to-right maxima out of them and move them together with their groups to the right and put them in the reverse order.
\end{proof}

We want to add a different proof of this lemma through Stirling numbers.

\begin{proof}
We want to show that
$f_{n}(a,b) = \sum_{k = 0}^{n-1} \binom{n-1}{k} \cdot c(k, a-1) \cdot c(n - k-1, b-1)$
is equal to $\binom{a + b-2}{a-1} \cdot c(n, a + b-2)$.
We can do this using the fact that $c(n, a + b-2)$ counts also permutations
of length $n-1$ with $a + b-2$ cycles.  Let us count permutations of length $n-1$
with $a + b-2$ cycles, of which $a-1$ are colored red and $b-1$ are colored blue,
in two different ways: on one hand, we may first choose a permutation
of length $n-1$ with $a + b-2$ cycles, then choose $a-1$ of these cycles to be
red, in $\binom{a + b-2}{a-1} \cdot c(n-1, a + b-2)$ ways.  On the other hand, for any
$k$ between $0$ and $n-1$ we may first choose $k$ elements to be in red cycles,
then make a permutation on these $k$ entries with exactly $a-1$ cycles, and
make a permutation with exactly $b-1$ cycles on the remaining $n - k-1$
entries.  Thus we have in total $\sum_{k = 0}^{n-1} \binom{n-1}{k} \cdot c(k, a-1)
\cdot c(n - k-1, b-1)$ ways in this case.  So the two things are equal, as
claimed.
\end{proof}

Now we are ready to prove the promised theorem about maximizing pairs for the skyscraper sequence.

\begin{thm}\label{thm:maximizingpairs}
The numbers in the maximizing pairs do not differ by more than $1$.
\end{thm}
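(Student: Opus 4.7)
The plan is to leverage the lemma just proved, which factors $f_n(a,b)$ as a product of a binomial coefficient and a Stirling number that depends only on the sum $a+b$. Concretely, combining the two proofs of the lemma yields (after using $f_n(1,c)=c(n-1,c-1)$)
\[
f_n(a,b) \;=\; \binom{a+b-2}{a-1}\,c(n-1,\,a+b-2).
\]
The key observation is that the second factor depends only on $s := a+b$, while the first factor is a binomial coefficient in a row of Pascal's triangle of total degree $s-2$. So if we fix $s$ and vary $a$ (with $b=s-a$), only the binomial factor changes, and it is unimodal in $a$ with peak at $a-1\in\{\lfloor (s-2)/2\rfloor,\lceil (s-2)/2\rceil\}$, i.e., at $|a-b|\le 1$.

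I would then run a standard ``swap toward the center'' argument. Suppose for contradiction that $(a,b)$ is a maximizing pair for some $n$ with $b\ge a+2$ (the case $a\ge b+2$ is symmetric by $f_n(a,b)=f_n(b,a)$). Set $s=a+b-2$ and compare $f_n(a,b)$ with $f_n(a+1,b-1)$. Since $(a+1)+(b-1)=a+b$, the Stirling factor $c(n-1,s)$ is unchanged, so
\[
\frac{f_n(a+1,b-1)}{f_n(a,b)} \;=\; \frac{\binom{s}{a}}{\binom{s}{a-1}} \;=\; \frac{s-a+1}{a}.
\]
From $b\ge a+2$ we get $2a\le a+b-2=s$, so $a\le s/2$, hence $s-a+1\ge a+1>a$, and the ratio is strictly greater than $1$. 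This contradicts maximality and forces $|a-b|\le 1$.

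Two small bookkeeping points to check along the way: first, that $(a+1,b-1)$ is in the valid range (namely $a+1\ge 1$ and $b-1\ge 1$, which both follow from $a\ge 1$ and $b\ge a+2\ge 3$; and $a+b\le n+1$ is preserved since the sum is unchanged); and second, that the Stirling factor $c(n-1,s)$ is nonzero, which holds precisely when $s\le n-1$, equivalently $a+b\le n+1$, i.e., in the entire range where $f_n(a,b)$ itself can be nonzero. There is no real obstacle here; the only delicate point is making sure the ratio comparison is strict, which is why the hypothesis $b\ge a+2$ (rather than just $b\ge a+1$) is exactly what the statement requires.
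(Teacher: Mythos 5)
Your proof is correct and follows essentially the same route as the paper: both reduce the problem to the factorization $f_n(a,b) = \binom{a+b-2}{a-1} f_n(a+b-1,1)$, so that along each anti-diagonal the entries are proportional to a row of Pascal's triangle, whose maximum sits at the middle. Your version simply makes the unimodality explicit via the ratio $\binom{s}{a}/\binom{s}{a-1} = (s-a+1)/a$, which is a welcome amount of extra detail but not a different argument.
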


\begin{proof}
The maximum number in a table corresponding to the given $n$ is the maximum number in its line parallel to antidiagonal. As each diaginal is proportionate to a row in the Pascal's triangle, the maximum can be found in its middle.
\end{proof}

Now let us sum all numbers in each row of Table~\ref{f7ab}. We get the following sums: 720, 1764, 1624, 735, 175, 21, 1. This numbers bring us back to the sequence A132393 in the OEIS \cite{OEIS}. These numbers are a row in the triangle of unsigned Stirling numbers of the first kind. This brings us to the next lemma:

\begin{lemma}
$\sum_b f_n(a,b) = f_{n+1}(a+1,1)$.
\end{lemma}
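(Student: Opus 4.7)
The plan is to show that both sides of the identity are equal to the same Stirling number $c(n,a)$, and so are equal to each other. This reduces the lemma to two small observations about skyscraper numbers in terms of one-sided counts, both of which were essentially already used in Section~\ref{sec:numbers}.

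First I would handle the left-hand side. By the very definition of $f_n(a,b)$, it counts permutations of length $n$ with exactly $a$ left-to-right maxima and exactly $b$ right-to-left maxima. Summing over $b$ therefore counts permutations of length $n$ with exactly $a$ left-to-right maxima and any number of right-to-left maxima, which is precisely $c(n,a)$. So
\[
\sum_{b} f_n(a,b) = c(n,a).
\]

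Next I would evaluate the right-hand side. The condition that a permutation of length $n+1$ has exactly one right-to-left maximum forces the largest entry $n+1$ to sit in the last position (since any entry weakly to its right other than itself would fail to be surpassed on the right, producing additional right-to-left maxima). Fixing $n+1$ at the end, the remaining $n$ positions form an arbitrary permutation of $\{1,\ldots,n\}$, and the final entry $n+1$ is automatically a left-to-right maximum of the full permutation. Hence to obtain $a+1$ left-to-right maxima in total, the initial length-$n$ permutation must contribute exactly $a$ left-to-right maxima. This gives
\[
f_{n+1}(a+1,1) = c(n,a),
\]
and combining the two equalities yields the lemma.

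There is no real obstacle here: the argument is just the observation that $f_n(1,b)=c(n-1,b-1)$ (used earlier in the paper) applied in the symmetric form $f_{n+1}(a+1,1)=c(n,a)$, together with the fact that summing $f_n(a,b)$ over $b$ removes the right-hand constraint. If desired one could alternatively derive the identity by manipulating the convolution formula $f_n(a,b)=\sum_k \binom{n-1}{k}c(k,a-1)c(n-k-1,b-1)$ and using the Stirling recurrence $c(n,a)=c(n-1,a-1)+(n-1)c(n-1,a)$, but the bijective/double-counting route above is cleaner and parallel to the spirit of Section~\ref{sec:numbers}.
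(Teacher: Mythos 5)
Your proof is correct and takes essentially the same approach as the paper: both arguments rest on the observation that a row counted by $f_{n+1}(a+1,1)$ is just a row of length $n$ with $a$ left-to-right maxima (and arbitrary right-to-left maxima) with the tallest building appended on the right. The paper states this as a direct bijection, while you route both sides through the Stirling number $c(n,a)$; the content is identical.
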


\begin{proof}
Consider a row of length $n+1$ with $a+1$ left-to-right maxima and 1 right-to-left maximum. In this configuration the tallest building is on the right. If we remove the tallest building, the we get a row of length $n$ with $a$ left-to-right maxima and an unknown number of right-to-left maxima.
\end{proof}

Skyscraper numbers are fun to play with. We are sure that they are hiding many more secrets.

\bigskip
\hrule
\bigskip

\noindent 2010 {\it Mathematics Subject Classification}: Primary 11B83; Secondary 11Y55.

\noindent \emph{Keywords: } sequences, skyscraper puzzle, Stirling numbers.

\bigskip
\hrule
\bigskip

\noindent
(Mentions A132393, A218531.)

\bigskip
\hrule
\bigskip

\end{document}